\documentclass{article}
\usepackage{graphicx} 
\usepackage{graphicx}
\usepackage{amsmath,amsthm,amssymb,enumerate}
\usepackage{euscript,mathrsfs}
\usepackage{color}
\usepackage{dsfont}
\usepackage[left=2cm,right=2cm,top=3.5cm,bottom=3.5cm]{geometry}
\usepackage{color}
\usepackage[framemethod=tikz]{mdframed}
\allowdisplaybreaks

\usepackage{soul}
\usepackage{esint}
\catcode`\@=11 \@addtoreset{equation}{section}

\catcode`\@=12

\allowdisplaybreaks

\newtheorem{Theorem}{Theorem}[section]
\newtheorem{Proposition}[Theorem]{Proposition}
\newtheorem{Lemma}[Theorem]{Lemma}
\newtheorem{Corollary}[Theorem]{Corollary}

\theoremstyle{definition}
\newtheorem{Definition}[Theorem]{Definition}

\newtheorem{Remark}[Theorem]{Remark}

\newcommand{\bTheorem}[1]{
\begin{Theorem} \label{T#1} }
\newcommand{\eT}{\end{Theorem}}

\newcommand{\bProposition}[1]{
\begin{Proposition} \label{P#1}}
\newcommand{\eP}{\end{Proposition}}

\newcommand{\bLemma}[1]{
\begin{Lemma} \label{L#1} }
\newcommand{\eL}{\end{Lemma}}

\newcommand{\bCorollary}[1]{
\begin{Corollary} \label{C#1} }
\newcommand{\eC}{\end{Corollary}}

\newcommand{\bRemark}[1]{
\begin{Remark} \label{R#1} }
\newcommand{\eR}{\end{Remark}}

\newcommand{\bDefinition}[1]{
\begin{Definition} \label{D#1} }
\newcommand{\eD}{\end{Definition}}

\newcommand{\bfphi}{\boldsymbol{\varphi}}

\newcommand{\bFormula}[1]{
\begin{equation} \label{#1}}
\newcommand{\eF}{\end{equation}}

\newcommand{\Ov}[1]{\overline{#1}}

\newcommand{\vr}{\varrho}

\newcommand{\vt}{\vartheta}

\newcommand{\vm}{\vc{m}}

\newcommand{\vn}{\vc{n}}

\newcommand{\vc}[1]{{\bf #1}}

\newcommand{\Div}{{\rm div}_x}
\newcommand{\Grad}{\nabla_x}

\newcommand{\dx}{\,{\rm d} {x}}

\newcommand{\dt}{\,{\rm d} t }

\newcommand{\intO}[1]{\int_{\Omega} #1 \ \dx}

\newcommand{\vv}{\vc{v}}

\newcommand{\D}{{\rm d}}

\newcommand{\ep}{\varepsilon}

\newcommand{\R}{\mathbb{R}}

\newcommand{\br}{ \nonumber \\ }

\def\softd{{\leavevmode\setbox1=\hbox{d}%
          \hbox to 1.05\wd1{d\kern-0.4ex{\char039}\hss}}}
\definecolor{Cgrey}{rgb}{0.85,0.85,0.85}
\definecolor{Cblue}{rgb}{0.50,0.85,0.85}
\definecolor{Cred}{rgb}{1,0,0}
\definecolor{fancy}{rgb}{0.10,0.85,0.10}

\newcommand\Cbox[2]{%
    \newbox\contentbox%
    \newbox\bkgdbox%
    \setbox\contentbox\hbox to \hsize{%
        \vtop{
            \kern\columnsep
            \hbox to \hsize{%
                \kern\columnsep%
                \advance\hsize by -2\columnsep%
                \setlength{\textwidth}{\hsize}%
                \vbox{
                    \parskip=\baselineskip
                    \parindent=0bp
                    #2
                }%
                \kern\columnsep%
            }%
            \kern\columnsep%
        }%
    }%
    \setbox\bkgdbox\vbox{
        \color{#1}
        \hrule width  \wd\contentbox %
               height \ht\contentbox %
               depth  \dp\contentbox
        \color{black}
    }%
    \wd\bkgdbox=0bp%
    \vbox{\hbox to \hsize{\box\bkgdbox\box\contentbox}}%
    \vskip\baselineskip%
}

\mdfdefinestyle{MyFrame}{%
	linecolor=black,
	outerlinewidth=1pt,
	roundcorner=5pt,
	innertopmargin=\baselineskip,
	innerbottommargin=\baselineskip,
	innerrightmargin=10pt,
	innerleftmargin=10pt,
	backgroundcolor=white!20!white}

\date{}

\begin{document}


\title{Maximal entropy production principle and\\the Euler system of gas dynamics}

\author{Elisabetta Chiodaroli\footnote{\texttt{elisabetta.chiodaroli@unipi.it}} 
\and Eduard Feireisl\footnote{\texttt{feireisl@math.cas.cz}}  \and Ond\v rej Kreml\footnote{\texttt{kreml@math.cas.cz}} \and Simon Markfelder\footnote{\texttt{simon.markfelder@uni-konstanz.de}}}

\date{}

\maketitle

\medskip

\centerline{$^{\ast}$ Dipartimento di Matematica, Universit\` a di Pisa,} 
\centerline{ Via F. Buonarroti 1/c, 56127, Pisa, Italy}

\medskip

\centerline{$^{\dagger}$ $^{\ddagger}$ Institute of Mathematics of the Academy of Sciences of the Czech Republic,}
\centerline{\v Zitn\' a 25, CZ-115 67 Praha 1, Czech Republic}

\medskip

\centerline{$^{\S}$ University of Konstanz, Department of Mathematics and Statistics,} 
\centerline{Post office box: 199, 78457 Konstanz, Germany}

\date{}

\maketitle

\begin{abstract}
	
Convex integration has revealed that the Euler system of gas dynamics is ill-posed in the class of weak solutions even if the entropy inequality is imposed as an additional constraint. A natural question arises, namely, if a physically relevant solution can be selected by maximizing the entropy production rate. Firstly, we present an example of Riemann initial data in 2--D, for which the standard self-similar solution fails to satisfy the maximal entropy production principle. Hence, maximizing the entropy production rate rules out the 1--D self-similar solution which intuitively seems to be the physically relevant solution in this context. Secondly, we show for a large class of initial data that there exist entropy admissible weak solutions with an arbitrary (non--decreasing) total entropy profile.

\end{abstract}

\bigskip

{\bf Keywords:} Euler system of gas dynamics, entropy production, Riemann problem, convex integration

\bigskip

\section{Introduction}

The Euler system of gas dynamics is a mathematical model describing the time evolution of a compressible inviscid fluid taking into account changes in the temperature. From the mathematical viewpoint, it represents an iconic example of a system of non--linear conservation laws, see e.g.~Part IV of the monograph by Benzoni-Gavage and Serre \cite{BenSer} or Dafermos \cite{D4a}.

\subsection{Euler system}

The Euler system uses the language of continuum mechanics.  
The time evolution of the phase variables, the mass density $\vr= \vr(t,x)$, the (absolute) temperature $\vt = \vt(t,x)$, and the velocity 
$\vv = \vv(t,x)$, is described by means of a system of field equations:
\begin{align}
	\partial_t \vr + \Div (\vr \vv) &= 0, \label{E1}\\ 
	\partial_t (\vr \vv) + \Div (\vr \vv \otimes \vv) + \Grad p(\vr, \vt) &= 0, \label{E2}\\
	\partial_t \left( \frac{1}{2} \vr |\vv|^2 + \vr e(\vr, \vt) \right) + 
	\Div \left[ \left( \frac{1}{2} \vr |\vv|^2 + \vr e(\vr, \vt) + p(\vr, \vt) \right) \vv \right] &=0, \label{E3}
\end{align}
expressing the physical principles of conservation of mass, linear momentum, and energy, respectively.
For definiteness, we suppose that the pressure $p=p(\vr, \vt)$ and the internal energy $e=e(\vr, \vt)$ obey the standard Boyle--Mariotte law,
\begin{equation} \label{E4} 
	p(\vr, \vt) = \vr \vt,\ e(\vr, \vt) = c_v \vt,\ c_v > 0,
	\end{equation}
where the constant $c_v$ represents the specific heat at constant volume.
The problem is completed by prescribing the initial conditions
\begin{equation} \label{E5}
\vr(0, \cdot) = \vr_0 ,\ \vt(0, \cdot) = \vt_0,\ \vv(0, \cdot) = \vv_0,
\end{equation}
as well as some relevant boundary conditions depending on the choice of the 
physical domain occupied by the fluid. 

On the one hand, the Euler system is well--posed, locally in time, in the class of sufficiently regular initial data, see e.g.~Benzoni-Gavage and Serre \cite[Part IV] {BenSer} or Schochet \cite[Theorem 1]{SCHO1}. On the other hand, strong solutions may develop singularities in the form of shock waves appearing in a finite time, see e.g.~the monograph of Smoller 
\cite{SMO}. More recently, Buckmaster et al.~\cite{BUCLGS} showed the 
existence of self-imploding singularities for the simplified isentropic version of the Euler system. 

In addition to \eqref{E1}--\eqref{E3}, \emph{entropy admissible} weak (distributional) solutions  
are required to satisfy the entropy inequality
\begin{equation} \label{entro}
	\partial_t (\vr s(\vr, \vt)) + \Div( \vr s(\vr, \vt) \vv) \geq 0,\ \mbox{with entropy}\ 
	s(\vr, \vt) = c_v \log \vt - \log \vr, 
\end{equation}
in the sense of distributions.

\subsection{Ill--posedness in the class of entropy admissible weak solutions}

The recent developments of the theory of convex integration in the framework 
of fluid dynamics, starting with the seminal works of De Lellis and Sz{\'e}kelyhidi \cite{DelSze09} and \cite{DelSze3},
revealed several rather surprising facts concerning well--posedness of the Euler system even in the class of \emph{entropy admissible} 
weak solutions. The ill--posedness of the Riemann problem in the 
physically relevant 2/3--D geometry was shown by Klingenberg et al.~\cite[Theorem 1.1]{KlKrMaMa}, see also Al Baba et al.~\cite{ABKlKrMaMa}. 
In \cite{FeKlKrMa}, a large class of so-called ``wild'' initial data that give rise to infinitely many entropy admissible weak solutions was identified. Finally, it was shown in \cite{ChFe2023} that wild data are dense in the $L^p-$topology. 

\subsection{Entropy production rate as an admissibility criterion} 
 
In view of Schwartz representation theorem, the entropy inequality 
\eqref{entro} can be interpreted as 
\begin{equation*} 
	\partial_t (\vr s(\vr, \vt)) + \Div( \vr s(\vr, \vt) \vv) = \sigma, 
\end{equation*}
where $\sigma \in L^\infty(0,T; \mathcal{M}^+(\Ov{\Omega}))$ is the 
entropy production rate. The symbol $\mathcal{M}^+$ denotes the set of 
all non--negative Borel measures defined on the closure of the fluid domain 
$\Omega$.

DiPerna \cite{Dip2} proposed a partial ordering on the class of entropy admissible solutions:
\begin{equation} \label{DiP1}
(\vr^1, \vt^1, \vv^1) \ \prec_{\rm DiP} \ (\vr^2, \vt^2, \vv^2)
\ \Leftrightarrow\ \intO{ \vr^1 s(\vr^1, \vt^1) (t, \cdot) } 
\leq  \intO{ \vr^2 s(\vr^2, \vt^2) (t, \cdot) } \ \mbox{for a.a.}\ 
t \in (0,T). 
\end{equation}
We define as \emph{DiPerna admissible} weak solutions those weak solutions that are \emph{maximal} with respect to $\prec_{\rm DiP}$. 

Dafermos in \cite{Dafer} advocates a refined version of $\prec_{\rm DiP}$, specifically  
\begin{align} 
	(\vr^1, \vt^1, \vv^1) \ &\prec_{\rm Daf} \ (\vr^2, \vt^2, \vv^2)
	\br &\Leftrightarrow \br \mbox{there exists}\ \tau\geq 0 
	\ \mbox{such that}\ 
(\vr^1, \vt^1, \vv^1)(t, \cdot) &= (\vr^2, \vt^2, \vv^2)(t, \cdot) \ \mbox 
{for}\ 0 \leq t < \tau,	\br
 \frac{{\rm d}^+}{\dt}		\intO{ \vr^1 s(\vr^1, \vt^1) (\tau, \cdot) } 
	&<  \frac{{\rm d}^+}{\dt}\intO{ \vr^2 s(\vr^2, \vt^2) (\tau, \cdot) }.  \label{Dafer1}
\end{align}
Similarly, a weak solution is called \emph{Dafermos admissible} or \emph{entropy rate admissible} if it 
is maximal with respect to the partial ordering $\prec_{\rm Daf}$. 
It is worth noting that both \eqref{DiP1} and \eqref{Dafer1} were proposed 
for closed systems with energy conservative boundary conditions. 

Our aim is to shed some light on the relevance of this type of admissibility 
criteria for a correct choice of solution to the Euler system. 
We discuss two different problems.

\subsubsection{Admissibility of self--similar solutions for the Riemann problem}

We consider the 1--D (piecewise constant) Riemann initial data in the 2--D domain $\R^2$, extended to be constant in the $x_1$ variable: 
\begin{equation}\label{eq:R_data}
	(\vr_0,\vt_0,\vv_0) = \left\{
	\begin{array}{l}
		(\vr_-,\vt_-, \vv_-)\ \mbox{ if } x_1 \in \R, \ x_2<0 \\
		(\vr_+, \vt_+, \vv_+)\ \mbox{ if } x_1 \in \R,\ x_2>0.
	\end{array}
	\right. 
\end{equation}

Since these initial data are one-dimensional, the classical theory of hyperbolic conservation laws yields the existence of a unique 1--D solution 
(independent of $x_1$) to problem \eqref{E1}--\eqref{E3} with initial data \eqref{eq:R_data} satisfying the entropy inequality \eqref{entro}. This solution consists of constant states connected by either shocks, rarefaction waves or contact discontinuities. We will call these solutions \emph{1--D self-similar solutions} and we refer to \cite{ABKlKrMaMa}, \cite{KlKrMaMa}, and \cite{SMO}, for the conditions on the initial data giving rise to different types of 1--D self-similar solutions. See also Dafermos \cite{D4a} for more details about the 1--D Riemann problem for systems of conservation laws.

On the unbounded domain $\R^2$, we modify the definition of Dafermos' ordering \eqref{Dafer1} by introducing the following order relation among weak solutions:
\begin{align} 
	(\vr^1, \vt^1, \vv^1) \ &\prec \ (\vr^2, \vt^2, \vv^2)
	\br &\Leftrightarrow \br \mbox{there exists}\ \tau\geq 0 
	\ \mbox{such that}\ 
	(\vr^1, \vt^1, \vv^1)(t, \cdot) &= (\vr^2, \vt^2, \vv^2)(t, \cdot) \ \mbox 
	{for}\ 0 \leq t < \tau, \br \mbox{there exists}\ \underline{L} 
	\ \mbox{such that}\ 
	\frac{{\rm d}^+}{\dt} \int_{-L}^L \int_{-L}^L \vr^1 s(\vr^1, \vt^1) (\tau, \cdot) \ \D x_1 \D x_2 
	&<  \frac{{\rm d}^+}{\dt}  \int_{-L}^L \int_{-L}^L \vr^2 s(\vr^2, \vt^2) (\tau, \cdot) \ \D x_1 \D x_2\ \mbox{for all}\ L \geq \underline{L}.  \label{Dafer2}
\end{align}

\begin{Definition}[\bf Entropy rate admissibility for the unbounded domain $\R^2$] \label{Di1}
In the context of the initial data \eqref{eq:R_data}, we say a weak solution 
$(\vr, \vt, \vv)$ is \emph{Dafermos admissible} or \emph{entropy rate admissible} if it is 
maximal with respect to the partial order relation $\prec$. 
\end{Definition}

\bRemark{Rem_periodic}
The restriction to a bounded square $[-L,L]^2\subset \R^2$ as done in the definition of the order $\prec$, see \eqref{Dafer2}, is one possibility to overcome the issue of infinite total entropy on the unbounded domain $\R^2$. Another option is to periodize as explained in \cite[Section 6]{ChiKre} for the isentropic case. The same strategy can also be carried out in the case of the full Euler system \eqref{E1}--\eqref{E3} studied in this paper. 
\eR

Our first result, formulated and proved in Section \ref{R} (see Theorem \ref{Tmain_1}) shows the existence of some 1--D Riemann initial data in $\R^2$ such that the associated 1--D self-similar solution \emph{is not} entropy rate admissible. As a consequence of the construction, in this case the 1--D self-similar solution also fails to be DiPerna admissible.

\subsubsection{Admissible forms of the entropy production rate in the class of weak solutions}

For the second problem discussed in this paper we consider the Euler system \eqref{E1}--\eqref{E3} in a bounded (regular) spatial domain $\Omega \subset \R^3$, supplemented with the impermeable wall boundary conditions 
\begin{equation} \label{imw}
	\vv \cdot \vc{n}|_{\partial \Omega} = 0.
\end{equation}

As shown in \cite{FeiLukYu}, the problem admits a generalized dissipative 
measure--valued (DMV) solution for \emph{any} finite energy initial data. 
Moreover, rather surprisingly, 
the entropy rate admissible DMV solutions, specifically maximal with respect to $\prec_{\rm Daf}$, must be weak solutions, see \cite[Theorem 3.2]{FeiLukYu}. The proof of this result is based on a special construction of the entropy production rate $\sigma$, with jumps - concentrations 
in the form of a Dirac mass sitting on a specific set of times. Some natural 
questions arise: to which extent is this construction pertinent to DMV solutions and do there exist or not weak solutions with arbitrary entropy profile? 

Given a non--decreasing function $\widetilde{S}: [0,T]$, $S(0+) = 0$, we show that there always exist initial data $(\vr_0, \vt_0, \vv_0)$ and a global in time entropy admissible weak solution $(\vr, \vt, \vv)$ to the Euler system satisfying   
\begin{equation*} 
\intO{ \vr s (\vr, \vt) (t, \cdot) } = S_0 + c_v \widetilde{S}(t) \intO{ \vr_0 (\cdot)}
\quad \ \mbox{for a.a.}\ t \in (0,T),
\end{equation*}
see Theorem \ref{TEP1}. The proof is given in Section \ref{EP} and is based on a
``discontinuous'' convex integration technique developed in \cite{AbbFei}.

\bigskip 

\hrule 

\bigskip

The paper is organized as follows. Section \ref{R} is devoted to Riemann problems and incompatibility of 1--D self--similar solutions with the entropy rate admissibility. In Section \ref{EP}, we identify the initial data that give rise to weak solutions with a given entropy production profile. 
The paper is concluded with a short discussion in Section \ref{C}.

\section{Riemann problem}
\label{R}

We consider the Riemann problem for the Euler system \eqref{E1}--\eqref{E3}
in the space time--cylinder $(0,T) \times \R^2$, with the 1--D initial data 
\eqref{eq:R_data}. As mentioned above, 
the problem admits infinitely many entropy admissible weak solutions 
as long as the initial data contain a shock, see \cite{ABKlKrMaMa}, \cite
{KlKrMaMa}. On the contrary, if the 1--D self-similar solution consists only of rarefaction waves and does not contain neither shocks nor a contact discontinuity, this solution is in fact unique in the class of entropy admissible weak solutions to the multidimensional problem, see Chen and Chen \cite{CheChe}, and \cite{FeKrVa}.
Interestingly, the question whether the 1--D self-similar solution to \eqref{E1}--\eqref{E3} with initial data \eqref{eq:R_data} containing a contact discontinuity but not containing a shock is unique among entropy admissible solutions to the multidimensional problem is still open.

In the context of the isentropic Euler equations, Krupa and Sz\'ekelyhidi in \cite{KRSZ} addressed the analogous problem in the regime when the 1--D self-similar solution consists of a single contact discontinuity, and they showed non-uniqueness in the class of bounded, admissible weak solutions for contact discontinuity initial data in more than one space dimension. Their proof relies on a computer-assisted framework that  exploits the pressure law as an additional degree of freedom. Very recently, Horimoto \cite{HOR} established the existence of infinitely many admissible weak solutions for a subclass of contact discontinuity Riemann data with a general strictly increasing pressure law. 

The non-uniqueness results quoted above highlight the problem of finding a suitable selection principle which is supposed to pick the physically relevant solution among all entropy admissible weak solutions. 
Apparently, the entropy inequality alone does not provide the answer, so other selection criteria have to be considered.

In \cite{ChiKre}, the concept of maximal dissipation of total energy for the isentropic Euler system was studied. As mentioned above, the idea goes back to the work of Dafermos \cite{Dafer}, DiPerna \cite{Dip2}, and also Hsiao \cite{Hsiao}. 
It was shown in \cite{ChiKre} that there exist Riemann initial data to the isentropic Euler system such that the associated 1--D self-similar solution is not energy dissipation rate admissible, meaning there exists another solution which dissipates the total energy at a higher rate. This result was further strengthened by the fourth author \cite{Mark}, who showed that even a local version of the energy dissipation rate criterion does not select the 1--D self-similar solution as the best. 

In the context of the isentropic Euler equations, another criterion regarding the action has been proposed by Gimperlein et al.~\cite{Slemrod}, the so-called \emph{least action criterion}. Similarly to the aforementioned results on the energy dissipation rate, it was shown in \cite{MarPel} that the least action criterion does not single out the 1--D self-similar solution. 

\subsection{Entropy rate admissibility for the Euler system of gas dynamics}

Although the total energy of the isentropic Euler system 
is somehow interpreted, mostly by mathematicians, as entropy, the 
weak solutions of the isentropic Euler system do not solve 
the (full) Euler system \eqref{E1}--\eqref{E3} unless their total energy as well as entropy are constant. Seen from this viewpoint, the Euler system 
\eqref{E1}--\eqref{E3} is much closer to the real world problems, typically 
for gases. Still the entropy rate admissibility criterion introduced 
in Definition \ref{Di1} fails to identify the 1--D self-similar solution as 
admissible in the 2--D setting. 

Our first main result reads as follows.

\begin{Theorem}[\bf The 1--D self-similar solution is not entropy rate admissible] \label{Tmain_1}
Let $c_v = \frac 32$. There exist Riemann initial data \eqref{eq:R_data} such that the 1--D self-similar solution to the Euler system \eqref{E1}--\eqref{E3}, with \eqref{entro} in $(0,T) \times \R^2$ is not entropy rate admissible in the sense specified in Definition \ref{Di1}.
\end{Theorem}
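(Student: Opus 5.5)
The argument follows the strategy of \cite{ChiKre} and \cite{Mark} for the isentropic system, adapted to the full Euler system via the convex integration construction for Riemann data of \cite{KlKrMaMa}, \cite{ABKlKrMaMa}. We choose Riemann data \eqref{eq:R_data} whose 1--D self-similar solution contains a shock; for definiteness, two shocks separated by a contact discontinuity, with $c_v=\frac32$ fixing the adiabatic exponent $\gamma = 5/3$. For such data, \cite{KlKrMaMa} gives infinitely many entropy admissible weak solutions. Each is built from a fan subsolution: piecewise constant states $(\vr_\pm,\vt_\pm,\vv_\pm)$ outside a wedge $\{\mu_0 t< x_2<\mu_1 t\}$ (possibly several wedges), and inside the wedge constant $\vr_1,\vt_1$ or $\vr_1, p_1$ with averaged momentum and a constant symmetric ``turbulent'' matrix $\mathbb{U}$ and energy $C_1$. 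These generalized states are realized by infinitely many weak solutions that coincide with the subsolution in density and temperature; the oscillations are only in $\vv$, with $\frac12\vr|\vv|^2$ fixed. Since $\vr$ and $\vt$ of the weak solution equal those of the subsolution, the total entropy $\int_{-L}^{L}\int_{-L}^{L} \vr s(\vr,\vt)\,\D x$ of each wild solution is computable explicitly from the fan parameters. This is the key simplification.

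Both the self-similar solution and the wild solutions start from the same data at $\tau=0$, so by \eqref{Dafer2} we compare the right derivatives at $t=0$. For a solution that is piecewise constant on self-similar wedges $x_2/t\in(\mu_{i-1},\mu_i)$, the entropy in $[-L,L]^2$ for $t<L/\max|\mu_i|$ is
\[
2L\Big( L(\vr_-s_-+\vr_+s_+) + t\sum_i (\mu_i-\mu_{i-1})\,\vr_i s_i - t\mu_0 \vr_- s_- + t\mu_{N}\vr_+ s_+ \Big),
\]
after accounting for the outer states. Hence the right derivative at $0$ is $2L$ times an explicit $L$-independent quantity, linear in $t$, and the same holds for the self-similar solution, including any rarefaction contributions, which are integrable in $\xi=x_2/t$. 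Equivalently, by the entropy equation this rate equals $2L\sum_{\text{fronts}}\big(\mu[\vr s]-[\vr s v_2]\big)$, the entropy production on the discontinuities. The theorem therefore reduces to a finite-dimensional inequality. We must find admissible fan subsolution parameters, satisfying the Rankine--Hugoniot conditions on $x_2=\mu_0t$ and $x_2=\mu_1 t$, the strict subsolution condition ($\mathbb{U}$ with eigenvalue bound, i.e. $C_1$ greater than the kinetic energy of the averaged momentum), and the entropy inequalities across both fronts, such that
\[
\sum_{\text{fronts}}\big(\mu[\vr s]-[\vr s v_2]\big)_{\rm wild} > \sum_{\text{fronts}}\big(\mu[\vr s]-[\vr s v_2]\big)_{\rm self\text{-}similar}.
\]
If this holds, the self-similar solution is $\prec$ some wild solution with $\tau=0$ and $\underline L$ arbitrary, so it is not maximal, and hence not entropy rate admissible.

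The main obstacle is this last step: exhibiting explicit numbers. The plan is to fix concrete data, e.g. $\vv_\pm=(0,v_\pm)$ with a strong compression $v_->v_+$ so that the self-similar solution has two shocks. We then parametrize the fan by the free unknowns: $\mu_0,\mu_1$, the wedge density and temperature, the averaged velocity, and the $\mathbb{U}$ components. The Rankine--Hugoniot equations are used to eliminate all but one or two parameters. We then solve the remaining constraints numerically, following \cite{ChiKre}, and verify the strict inequalities rigorously with explicit rational values and error bounds. Heuristically, wedges wider than the self-similar shock fan, with larger ``turbulent energy'', should produce more entropy at the fronts. If necessary, we will use a two-wedge fan (the analogue of \cite{Mark}) for additional freedom.
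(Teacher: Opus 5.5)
\textbf{The approach matches the paper, but the decisive step is missing.} Your reduction is the one the paper uses. You take data whose 1--D solution has two shocks and a contact, and build a single-wedge fan subsolution as in \cite{ABKlKrMaMa}. Every wild solution generated from it shares $\vr$ and $p$, hence $\vr s$, with the subsolution. You then compare the right derivatives at $\tau=0$ of the entropy in $[-L,L]^2$, each equal to $2L$ times an $L$-independent constant.

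The gap is that you stop exactly where the proof begins. Theorem~\ref{Tmain_1} is a pure existence statement. All of its content lies in exhibiting data and admissible fan parameters for which the strict inequality between the two rates actually holds. Your text only announces a numerical search (``the main obstacle is this last step''), backed by an unproved heuristic about wider wedges and larger turbulent energy. Nothing in your set-up forces the wild rate to exceed the self-similar one. This is a quantitative property that has to be verified on a concrete instance, and without one the argument establishes nothing.

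\textbf{How the paper closes this step.} It takes $(\vr_-,\vv_-,p_-)=(1,(0,0),2)$ and $(\vr_+,\vv_+,p_+)=(10,(0,-100),1)$, with a single wedge of density $\vr_1=14$. It solves the Rankine--Hugoniot system of \cite[Proposition 4.4]{ABKlKrMaMa} and checks its inequalities. The resulting box rates are $\approx 2L\cdot(-1661.456)$ for the self-similar solution and $\approx 2L\cdot 867.268$ for the wild ones. Continuous dependence on the algebraic data gives rigorous enclosures in $(-1662,-1661)$ and $(867,868)$. No second wedge is needed.

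\textbf{Sign error in your box-entropy formula.} The $-$ state occupies $[-L,\mu_0 t]$ and the $+$ state occupies $[\mu_N t,L]$. The box entropy is therefore $2L\big(L(\vr_-s_-+\vr_+s_+)+t\mu_0\vr_-s_- - t\mu_N\vr_+s_+ + t\sum_i(\mu_i-\mu_{i-1})\vr_i s_i\big)$. Equivalently, the rate is $\sum_{\rm fronts}\mu\,(\vr s_{\rm left}-\vr s_{\rm right})$, which is the paper's formula. Your version has the two boundary terms with reversed signs. Since $\mu_0$ and $\mu_N$ differ between the two competing solutions, this slip would corrupt the comparison.

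\textbf{Rate versus production.} The box rate does not \emph{equal} the total entropy production on the fronts. It differs from it by the boundary flux $\vr_+s_+v_{+,2}-\vr_-s_-v_{-,2}$. That flux is common to both solutions, so comparing productions is equivalent, but the identity you wrote is false. In the paper's example the self-similar box rate is negative while its production is positive.
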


Theorem \ref{Tmain_1} can be seen as a non-isentropic version of the aforementioned analogous result obtained in \cite{ChiKre}.

\bRemark{Rem_Di Perna}
Theorem \ref{Tmain_1} relies on the construction via convex integration of a solution (in fact infinitely many) which dissipates
the total entropy at a higher rate than the self--similar solution. 
As an outcome of the structure of the competing solution(s) constructed, it is easy to check that, for the same choice of Riemann data, the 1--D self-similar solution fails to be DiPerna admissible, too. 
\eR

\begin{proof}[Proof of Theorem \ref{Tmain_1}]
We provide a specific example. We take
\begin{equation*}
\begin{split}
    (\vr_-,\vv_-,\vt_-) &= (1,(0,0),2), \\
    (\vr_+,\vv_+,\vt_+) &= \left(10,(0,-100),\frac{1}{10}\right).
\end{split}
\end{equation*}
For our purposes it is more suitable to work with pressure instead of temperature, so using \eqref{E4} we reformulate these data as
\begin{equation}\label{eq:Bad_Rdata}
\begin{split}
    (\vr_-,\vv_-,p_-) &= (1,(0,0),2), \\
    (\vr_+,\vv_+,p_+) &= (10,(0,-100),1).
\end{split}
\end{equation}
It is not difficult to check that these data satisfy the assumptions of \cite[Proposition 2.1]{ABKlKrMaMa} and therefore the 1--D self-similar solution $(\vr_s,\vv_s,p_s)$ which emanates from these initial data consists of two shocks and a contact discontinuity. According to \cite[Proposition 2.1]{ABKlKrMaMa}, the intermediate states of the 1--D self-similar solution are given by $p_M$ which is the unique solution to the equation 
\begin{equation*}
			-\sqrt{2\,c_v}\left(\frac{p_M - p_-}{\sqrt{\vr_- (p_- + (2c_v + 1) p_M)}} + \frac{p_M - p_+}{\sqrt{\vr_+ (p_+ + (2c_v + 1) p_M)}}\right) = v_{+,2} - v_{-,2},
\end{equation*}
and its approximate value for \eqref{eq:Bad_Rdata} is 
\begin{equation}\label{eq:pm}
p_M \sim 7700.164.    
\end{equation}
The second component of the velocity of the intermediate states is given by
\begin{equation*}
    v_{M,2} = v_{-,2} - \sqrt{2\,c_v}\frac{p_M - p_-}{\sqrt{\vr_- (p_- + (2c_v + 1) p_M)}},
\end{equation*}
which yields
\begin{equation}\label{eq:vm}
\vv_M \sim (0,-75.972).
\end{equation}
The densities of the intermediate states are given by
\begin{equation*}
    \vr_{M\pm} = \vr_\pm \frac{p_\pm + (2c_v + 1) p_M}{p_M + (2c_v + 1) p_\pm},
\end{equation*}
giving rise to approximate values 
\begin{equation}\label{eq:rm}
\vr_{M-} \sim 3.996, \qquad \vr_{M+} \sim 39.981.
\end{equation}
The shock speeds are given by 
\begin{equation*}
    \sigma_\pm(\vr_\pm - \vr_{M\pm}) = \vr_\pm v_{\pm,2} - \vr_{M\pm}v_{M,2},
\end{equation*}
and their approximate values are
\begin{equation}\label{eq:shocks}
    \sigma_- \sim -101.329, \qquad \sigma_+ \sim -67.957,
\end{equation}
while the speed of the contact discontinuity $\sigma_M$ is equal to the second component of the intermediate velocity $\vv_M$, see \eqref{eq:vm}.

Since the 1--D self-similar solution is piecewise constant, it is not difficult to express the entropy production rate. It holds
\begin{equation*}
\begin{split}
    D_L[\vr_s,p_s] &:= \frac{{\rm d}^+}{\dt} \int_{-L}^L \int_{-L}^L \vr_s s\left(\vr_s, \frac{p_s}{\vr_s}\right) (t, \cdot) \ \D x_1 \D x_2 \\ 
    &= 2L\Big(\sigma_-(\vr_-s_- - \vr_{M-}s_{M-}) + \sigma_M(\vr_{M-}s_{M-} - \vr_{M+}s_{M+}) + \sigma_+(\vr_{M+}s_{M+} - \vr_+s_+)\Big),
\end{split}
\end{equation*}
where we denoted $s_{\pm} = s\left(\vr_\pm,\frac{p_\pm}{\vr_\pm}\right) = \log \frac{p_\pm^{c_v}}{\vr_\pm^{c_v+1}}$ and $s_{M\pm} = s\left(\vr_{M \pm},\frac{p_M}{\vr_{M\pm}}\right) = \log \frac{p_M^{c_v}}{\vr_{M\pm}^{c_v+1}}$. Note that $D_L[\vr_s,p_s]$ is in fact independent of time.

This enables us to calculate directly the value of the entropy production rate of the 1--D self-similar solution to be 
\begin{equation*}
    D_L[\vr_s,p_s] \sim 2L\cdot(-1661.456).
\end{equation*}
We emphasize that all quantities related to the studied 1--D self-similar solution are solutions to algebraic equations, in particular every value depends continuously on $p_M$, so it surely holds
\begin{equation} \label{eq:rate-1D}
    -1662 < \frac{D_L[\vr_s,p_s]}{2L} < -1661.
\end{equation}

Next, we will construct another solution $(\vr_c,\vv_c,p_c)$ to \eqref{E1}--\eqref{E3} with the same initial data \eqref{eq:Bad_Rdata} satisfying \eqref{entro}. This solution - in fact there are infinitely many of them - will be a so-called $1$-fan solution, which stems from a single piecewise constant $1$-fan subsolution, as introduced in \cite[Section 4]{ABKlKrMaMa}.

We take the intermediate density of the $1$-fan subsolution to be
\begin{equation}\label{eq:rho1}
    \rho_1 = 14.
\end{equation}
Solving the Rankine-Hugoniot conditions for the subsolution (see \cite[Proposition 4.4]{ABKlKrMaMa}) gives rise to the following values:
\begin{equation}\label{eq:othervalues}
\begin{array}{lll}
    \mu_- \sim -91.620, & \mu_+ \sim -47.765, & \beta \sim -85.076, \\
    p_1 \sim 4578.655, & C_1 \sim 7528.076, & \gamma \sim -3703.705.
\end{array}
\end{equation}
It is not difficult to check, that all the inequalities from \cite[Proposition 4.4]{ABKlKrMaMa} are satisfied as well. Since all solutions which are produced using this subsolution via the convex integration procedure share the values of density and pressure with the subsolution, we can calculate the entropy production rate of every such solution using only the subsolution. Similarly to the case of the 1--D self-similar solution, the entropy production rate is a constant given by
\begin{equation*}
\begin{split}
    D_L[\vr_c,p_c] &:= \frac{{\rm d}^+}{\dt} \int_{-L}^L \int_{-L}^L \vr_c s\left(\vr_c, \frac{p_c}{\vr_c}\right) (t, \cdot) \ \D x_1 \D x_2 \\ 
    &=  2L\Big(\mu_-(\vr_-s_- - \vr_1s_1) +\mu_+(\vr_1s_1 - \vr_+s_+)\Big),
\end{split}
\end{equation*}
where $s_1 = s\left(\vr_1,\frac{p_1}{\vr_1}\right) = \log \frac{p_1^{c_v}}{\vr_{1}^{c_v+1}}$. Plugging in the values \eqref{eq:rho1}, \eqref{eq:othervalues}, we obtain
\begin{equation*}
    D_L[\vr_c,p_c] \sim 2L\cdot(867.268).
\end{equation*}
Since all values in \eqref{eq:othervalues} are given as solutions to algebraic equations, they depend continuously on given data, therefore it surely holds 
\begin{equation} \label{eq:rate-wild}
    867 < \frac{D_L[\vr_c,p_c]}{2L} < 868.
\end{equation}
Comparing \eqref{eq:rate-1D} with \eqref{eq:rate-wild}, we find
\begin{equation*}
    D_L[\vr_c,p_c] > D_L[\vr_s,p_s],
\end{equation*}
which proves Theorem \ref{Tmain_1}.
\end{proof}

\bRemark{Rem_cv}

In fact, the same initial data and the same choice of $\vr_1$ in the
proof of \ref{Tmain_1} produce a similar counterexample with $c_v = 1$ and we observed
the same conclusion to hold with any choice of $c_v \in [1, \frac32]$ we tried, therefore
we conjecture that Theorem \ref{Tmain_1} holds for any $c_v \in [1,
\frac{3}{2}]$.
\eR

\bRemark{Rem_dimension}
The statement of Theorem \ref{Tmain_1} holds in any space dimension larger than $1$ with obvious modifications to the definition of entropy rate admissibility \eqref{Dafer2} to accommodate higher dimensions. It is enough to extend all solutions in the proof of Theorem \ref{Tmain_1} by constants to higher dimensions.
\eR

\section{Weak solutions with arbitrary entropy profile}
\label{EP}

In contrast to Section \ref{R}, we now consider the Euler system \eqref{E1}--\eqref{E3} in a bounded 
domain $\Omega \subset \R^3$ supplemented with the impermeable wall boundary 
conditions \eqref{imw}. Given an entropy admissible weak solution $(\vr, \vt, \vv)$, we set 
\begin{equation} \label{EP1}
S(t) = \intO{ \vr(t, \cdot) s(\vr(t, \cdot), \vt(t, \cdot)) } 
\ \mbox{for a.a.}\ t \in (0,T).
\end{equation}	
In accordance with the entropy inequality \eqref{entro}, the function 
can be identified for a.a.~$t \in (0,T)$ with a non--decreasing function of 
$t$.

Next, we introduce a family of entropy profiles 
\begin{equation} \label{p15}
	\mathfrak{S}[\delta] = \left\{ \widetilde{S} \ \mbox{non--decreasing in}\ [0,T] \ \Big| \ \widetilde{S}(t) = 0 \ \mbox{for}\ 0 \leq t < \delta ,\ 
\widetilde{S}(T-) < \infty \right\}. 	
\end{equation}	
Equivalently, we may say that the profiles belonging to 
$\mathfrak{S}[\delta]$ are bounded, vanishing in the interval $[0, \delta)$, 
with distributional derivative 
$\widetilde{S}' \in \mathcal{M}^+ [0,T]$ -- a non-negative Borel measure. 

Our second main result reads:

\begin{Theorem}[\bf Solutions with given entropy profile] \label{TEP1}
Let $\Omega$ be a bounded domain in $\R^3$ that admits a decomposition
\begin{equation*} 
	\Ov{\Omega} = \cup_{i=1}^N \Ov{Q}_i,\ Q_i \cap Q_j = \emptyset \ \mbox{for}\ i \ne j, \ Q_i \ \mbox{bounded domains.}	
\end{equation*}
Suppose the initial density $\vr_0$, and the initial temperature $\vt_0$ in 
\eqref{E5} are piecewise constant, specifically 
\begin{equation*} 
	\vr_0 |_{Q_i} = \vr_0^i - \mbox{a positive constant},\ \vt_0|_{Q_i} = \vt_0^i - \mbox{a positive constant},\ i = 1, \dots, N.
\end{equation*}
Let $c_v = \frac{3}{2}$, the total entropy profile $\widetilde{S} \in 
\mathfrak{S}[\delta]$, and $0 < \ep < \delta$ be given.

Then there exists an initial velocity $\vv_0 \in L^\infty(\Omega; \R^3)$ such that the Euler system \eqref{E1}--\eqref{E3}, \eqref{E5}, \eqref{imw} admits infinitely many entropy admissible weak solutions satisfying
\begin{equation} \label{EP2}
\frac{\D }{\dt} \intO{ \vr s} = c_v\widetilde{S}'(t + \ep) M_0 
\ \mbox{in}\ \mathcal{D}'(0, T- \ep),\ \mbox{where}\ \ M_0 = \intO{ \vr_0 }.
\end{equation} 

\end{Theorem}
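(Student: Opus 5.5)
We will build solutions in which the density stays piecewise constant in space and is time independent, $\vr(t,x) = \vr_0(x)$, while the temperature depends only on time within each cell. Concretely, we look for
\[
\vt(t,x) = \vt_0^i \exp\big(\widetilde{S}(t+\ep)\big) \quad \mbox{for } x \in Q_i .
\]
With $s = c_v\log\vt - \log\vr$ we then have, on $Q_i$,
\[
\vr s = \vr_0^i\big(c_v \log \vt_0^i - \log \vr_0^i\big) + c_v \vr_0^i \widetilde{S}(t+\ep).
\]
Integrating over $\Omega$ gives $\intO{\vr s} = S_0 + c_v M_0 \widetilde{S}(t+\ep)$, and differentiating yields exactly \eqref{EP2}.

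Since $\vr$ is constant in time, \eqref{E1} reduces to $\Div(\vr_0 \vv) = 0$. On each $Q_i$ this means $\Div \vv = 0$, and across the interfaces the normal flux $\vr_0\vv\cdot\vc{n}$ must match. The simplest way to guarantee this is to take $\vv\cdot\vc{n} = 0$ on $\partial Q_i$ for every $i$, which also gives \eqref{imw}. The pressure $p = \vr_0^i \vt(t)$ is spatially constant in $Q_i$, so $\Grad p$ concentrates on the interfaces. We will therefore use the freedom of the convex integration to absorb this. Splitting $\vr\vv\otimes\vv = \vr(\vv\otimes\vv - \frac13|\vv|^2\mathbb{I}) + \frac13\vr|\vv|^2\mathbb{I}$, the momentum equation becomes, on each $Q_i$, an incompressible Euler type problem with a prescribed ``kinetic pressure''. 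We will require
\[
\frac13 \vr_0^i |\vv|^2 + \vr_0^i \vt(t) = \Pi(t) \quad \mbox{in } Q_i,
\]
with $\Pi(t)$ the same function in all cells. This makes the total pressure continuous across interfaces, so no singular term arises.

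The energy equation \eqref{E3} then reads
\[
\frac{\D}{\dt}\Big(\frac12\vr|\vv|^2 + c_v\vr\vt\Big) + \Div\Big[\Big(\frac12\vr|\vv|^2 + (c_v+1)\vr\vt\Big)\vv\Big] = 0 .
\]
With $c_v = \frac32$ we have $\frac12\vr|\vv|^2 + \frac32\vr\vt = \frac32\Pi(t)$, which is spatially constant. The flux equals $\big(\frac32\Pi + \vr\vt\big)\vv$, which is divergence free up to the $\vr\vt$ part, and that part is constant in $Q_i$. Hence \eqref{E3} reduces to $\Pi'(t) = 0$, that is $\Pi \equiv \Lambda$ constant. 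This is where $c_v = \frac32$ is essential.

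So we need, for each $i$, a bounded weak solution of the incompressible Euler system in $Q_i$ with the following properties:
\[
\frac12|\vv|^2 = e_i(t) := \frac32\left(\frac{\Lambda}{\vr_0^i} - \vt_0^i e^{\widetilde{S}(t+\ep)}\right),
\]
with the impermeability condition on $\partial Q_i$ and a prescribed $L^\infty$ initial datum. The energy profile $e_i$ is non-increasing and possibly discontinuous. We choose $\Lambda$ large enough that $e_i \geq$ a positive constant on $[0,T]$, using $\widetilde{S}(T-)<\infty$.

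The key input is the ``discontinuous'' convex integration of \cite{AbbFei}, in the De Lellis--Sz\'ekelyhidi framework. Given a bounded domain and an energy profile $e(t)$ that is bounded below, non-increasing and merely with bounded variation (jumps allowed), it produces infinitely many weak solutions with $\frac12|\vv|^2 = e(t)$ a.e. It also provides a suitable wild initial datum. The shift by $\ep$ ensures that $e_i$ is constant on $[0,\delta-\ep)$, a nontrivial interval near $t=0$, which is needed to construct the initial subsolution and to obtain $\vv_0 \in L^\infty$.

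Gluing the cell velocities, extended by zero across the interfaces, gives a global $\vv_0$ and infinitely many solutions. Finally we verify the weak formulations globally. We test with functions crossing the interfaces, using the vanishing normal component and the continuity of the total pressure $\Lambda$. The entropy inequality \eqref{entro} holds because $\widetilde{S}$ is non-decreasing: $\partial_t(\vr s) = c_v\vr_0\,\widetilde{S}'(\cdot + \ep) \geq 0$, and $\Div(\vr s\vv) = 0$ since $\vr s$ is constant in each cell and $\vv$ is tangential.

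The main obstacle is this convex integration step with a discontinuous, time-dependent energy $e_i$. We need a subsolution whose energy matches $e_i$ on both sides of the jump times, together with a Baire category argument in a space admitting jumps. We will rely on \cite{AbbFei}, adapting it cellwise, rather than reproving it.
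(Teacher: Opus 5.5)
Your construction is the paper's ansatz: frozen piecewise constant density, temperature $\vt_0^i e^{\widetilde S}$ in $Q_i$, and total energy constant, so that (because $c_v=\frac32$) the quantity $\frac13\vr|\vv|^2+p$ is the same constant in every cell. The energy and entropy fluxes, which are linear in $\vm$, are then killed by $\Div\vm=0$, $\vm\cdot\vn|_{\partial Q_i}=0$. There is, however, a genuine gap in how you glue the momentum equation across the interfaces.

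You pose the cell problem as incompressible Euler \emph{with the impermeability condition on $\partial Q_i$}. You then claim that the global weak formulation follows from the vanishing normal velocity and the continuity of the total pressure. This does not follow. Write $\mathbb{M}=\vr(\vv\otimes\vv-\frac13|\vv|^2\mathbb{I})$ and use that $\frac13\vr|\vv|^2+p$ is a constant $\Pi$. Then the global momentum equation, tested with $\bfphi\in C^1_c([0,T)\times\Ov{\Omega};\R^3)$ and $\bfphi\cdot\vn|_{\partial\Omega}=0$, reduces to
\[
\sum_{i=1}^N\left(\int_0^T\int_{Q_i}\big[\vm\cdot\partial_t\bfphi+\mathbb{M}:\Grad\bfphi\big]\dx\dt+\int_{Q_i}\vm_0\cdot\bfphi(0,\cdot)\dx\right)=0 .
\]
Such $\bfphi$ have \emph{arbitrary} normal components on the interior interfaces. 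An impermeable weak solution in $Q_i$ only makes each bracket vanish for $\bfphi$ tangential on $\partial Q_i$. For general $\bfphi$, each bracket is a boundary functional, namely the normal trace of $\mathbb{M}$. Because the flux is quadratic, $\vv\cdot\vn=0$ says nothing about it.

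Heuristically, for a tangential field regular up to the boundary, $\vr(\vv\cdot\vn)^2=0$. The bracket would then equal $-\frac13\int_0^T\int_{\partial Q_i}\vr|\vv|^2\,\bfphi\cdot\vn$. Summing over two adjacent cells, using $\frac13\vr|\vv|^2=\Pi-p_i$, gives back exactly $(p_i-p_j)\,\bfphi\cdot\vn_i$, the pressure jump as a singular force on the interface. So continuity of $\Pi$ alone buys nothing.

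What is needed, and what the paper imposes in \eqref{problem}, is the ``do--nothing'' formulation. The cell equation $\partial_t\vm+\Div\mathbb{M}=0$ must hold for \emph{all} $\bfphi\in C^1_c([0,T)\times\Ov{Q}_i)$, with no restriction on the boundary. Equivalently, $\mathbb{M}\vn=0$ weakly on $\partial Q_i$, on top of $\vm\cdot\vn=0$. This means the weak normal trace of $\vr\vv\otimes\vv$ is $\frac13\vr|\vv|^2\vn$, although that of $\vr\vv$ vanishes.

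This non-classical property is precisely what convex integration supplies. The perturbations are compactly supported in $(0,T)\times Q_i$, and the starting subsolution (zero momentum and zero stress) solves the linear system in the whole space. Hence the limit satisfies the do--nothing identity. You must invoke \cite{AbbFei} for this problem, not for impermeable incompressible Euler.

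Two minor points:
\begin{itemize}
\item Your explanation of $\ep$ is off: $e_i$ is already constant on $[0,\delta)$ without any shift. The shift rather reflects that a solution emanating from a subsolution attains the prescribed energy only for $t>0$. So $\vv_0$ is the state at time $\ep$, where $\widetilde S(\ep)=0$ still holds, and the clock is restarted.
\item According to the paper, \cite{AbbFei} yields one solution; infinitely many are obtained via \cite[Section 13.6]{Fei2016}.
\end{itemize}
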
	

The remaining part of this section is devoted to the proof of Theorem \ref{TEP1}. We use the ansatz from \cite{FeKlKrMa},  combined with the ``discontinuous'' variant of convex integration from \cite{AbbFei}.

\subsection{Convex integration ansatz}

Given the initial data $\vr_0$, we fix the density to be constant in each 
$Q_i$, specifically, 
\begin{equation} \label{EP3}
\vr(t,x) = \vr_0^i \ \mbox{whenever}\ t \in [0,T], \ x \in Q_i,\ i=1, \dots, N.
\end{equation}	

The velocity is replaced by the momentum $\vm = \vr \vv$ satisfying 
\begin{equation} \label{p1}
	\vm \in L^1((0,T) \times \Omega; \R^3),\ \Div \vm = 0,\ \vm \cdot \vc{n}|_{\partial \Omega} = 0, 	
\end{equation}
specifically
\begin{equation} \label{p2}	
	\int_0^T \intO{ \vm \cdot \Grad \varphi } \dt = 0 \ \mbox{for any}\ \varphi \in C^1([0,T] \times \Ov{\Omega}).
\end{equation}

\subsubsection{Equation of continuity}

Obviously, the equation of continuity \eqref{E1} is satisfied being reduced to the identity
\begin{equation} \label{p3}
	\int_{\tau_1}^{\tau_2} \intO{\vr_0 \partial_t \varphi} = \left[ \intO{ \vr_0 \varphi } \right]_{t = \tau_1}^{t = \tau_2},\ 
	0 \leq \tau_1 \leq \tau_2 \leq T 
\end{equation}
for any $\varphi \in C^1([0,T] \times \Ov{\Omega})$. In particular, the total mass 
\[
M_0 = \intO{ \vr_0 }
\]
is a constant of motion.

\subsubsection{Entropy balance}

The temperature $\vt_0$ is a piecewise constant function, 
\begin{equation*} 
0 < \underline{\vt} \leq \vt^i_0 \leq \Ov{\vt},\ i = 1,\dots, N  	
\end{equation*}
Given $\widetilde{S} \in \mathfrak{S}[\delta]$, we set  
\begin{equation} \label{p5}
\vt(t,x) = \exp \Big( \widetilde{S}(t) \Big) \vt^i_0 \ \mbox{for}\ 
t \in [0,T],\ x \in Q_i, \ i = 1,\dots, N. 
\end{equation}
The associated internal energy reads 
\begin{equation*} 
(\vr e)(t,x) = c_v \vr^i_0 \vt^i_0 \exp \Big( \widetilde{S}(t) \Big) 
\ \mbox{for}\ t \in [0,T],\ x \in Q_i,\ i =1, \dots, N,
\end{equation*} 
with the entropy 
\begin{equation} \label{p7}
s(t,x) = c_v \log (\vt) - \log(\vr) = c_v \widetilde{S}(t) + c_v 
\log(\vt^i_0) - \log (\vr^i_0) \ \mbox{for}\ t \in [0,T],\ x \in Q_i,\ 
i = 1,\dots, N.
\end{equation}

Moreover, we compute the time derivative,

\begin{align*} 
\int_{\tau_1}^{\tau_2} \intO{ (\vr s) \partial_t \varphi} \dt &= \left[ \intO{ \Big( c_v \vr_0 \log(\vt_0)	- \vr_0 \log(\vr_0) \Big) \varphi } \right]_{t = \tau_1+}^{t = \tau_2 -} \br 
&+ \int_{\tau_1}^{\tau_2} \intO{ c_v \vr_0 \widetilde{S}(t) \partial_t \varphi } \dt
\end{align*}	
for any $\varphi \in C^1([0,T] \times \Ov{\Omega})$. 

It follows from \eqref{p7} that the total entropy is independent of $x$ on any $Q^i$. Thus we get the desired entropy balance 
\begin{equation} \label{EP15}
\partial_t (\vr s) + \Div (s \vm) = c_v \vr_0 \widetilde{S}'(t) 	
\end{equation}
as long as the momentum $\vm$ belongs to the class \eqref{p1}, with \eqref{p2} refined to 
\begin{equation} \label{p10}
\int_0^T \int_{Q_i} \vm \cdot \Grad \varphi \dx \dt = 0 \ \mbox{for any}\ \varphi \in C^1(\Ov{Q}_i ),\ i = 1, \dots, N.	
\end{equation}	
Condition \eqref{p10} means $\Div \vm = 0$ in $\Omega$, and 
$\vm \cdot \vn|_{Q_i} = 0$ for any $i = 1,\dots, N$.

\subsubsection{Momentum and total energy balance}

Similarly to \cite{FeKlKrMa}, the
momentum balance \eqref{E2} will be solved on any $Q = Q_i$, $i = 1,\dots, N$ by convex integration:
\begin{equation*} 
\Div \vm = 0,\ 	
\partial_t \vm + \Div \left( \frac{ \vm \otimes \vm }{\vr^i_0} - \frac{1}{3} \frac{|\vm|^2}{\vr^i_0} \mathbb{Id} \right) = 0 \ \mbox{in}\ (0,T) \times Q_i,\ i = 1,\dots, N,
\end{equation*}	
with the ``do--nothing'' boundary conditions, meaning without any restriction on the boundary behavior of test functions. More specifically, besides 
\eqref{p10}, we require 
\begin{equation*} 
\int_0^T \int_{Q_i} \left[ \vm \cdot \partial_t \bfphi + 
\left( \frac{\vm \otimes \vm}{\vr^i_0} - \frac{1}{3} \frac{|\vm|^2}{\vr^i_0} 
\mathbb{I} \right) : \Grad \bfphi \right] \dx \dt = - 
\int_{Q_i} \vm_0 \cdot \bfphi(0, \cdot) \dx ,\ i = 1, \dots, N,
\end{equation*}	
for any $\bfphi \in C^1_c([0,T) \times \Ov{Q}_i)$. Moreover, we prescribe 
the kinetic energy 
\begin{equation*} 
\frac{1}{2} \frac{|\vm|^2}{\vr^i_0} = \Lambda - \frac{3}{2} \exp\Big( \widetilde{S}(t) \Big) \vr^i_0 \vt^i_0 \ \mbox{in}\ (0,T) \times Q_i, 
\end{equation*}
where $\Lambda > 0$ is a suitable constant independent of $i$ to be determined below.

Formally, the total energy  
\begin{align*} 
\frac{1}{2} \frac{|\vm|^2}{\vr^i_0} + \frac{3}{2} \exp \Big( \widetilde{S}(t) \Big) \vr^i_0 \vt^i_0 = \Lambda \ \mbox{in}\ Q_i
\end{align*}
is constant in $\Omega$. In view of 
\eqref{p10}, the total energy balance \eqref{E3} is automatically satisfied. 

\subsection{Convex integration}

In the preceding part, we have reduced the proof of 
Theorem \ref{TEP1} to the solvability of the problem
\begin{align} 
\int_0^T \int_{Q_i} \vm \cdot \Grad \varphi \dx \dt &= 0 \ \mbox{for any}\ \varphi \in C^1(\Ov{Q}_i ) ,\br
\int_0^T \int_{Q_i} \left[ \vm \cdot \partial_t \bfphi + 
\left( \frac{\vm \otimes \vm}{\vr^i_0} - \frac{1}{3} \frac{|\vm|^2}{\vr^i_0} 
\mathbb{I} \right) : \Grad \bfphi \right] \dx \dt &= - 
\int_{Q^i} \vm_0 \cdot \bfphi(0, \cdot) \dx \  
\mbox{for any}\ \bfphi \in C^1_c([0,T) \times \Ov{Q}_i) \br
\frac{1}{2} \frac{|\vm|^2}{\vr^i_0} &= \Lambda - \frac{3}{2} \exp\Big( \widetilde{S}(t) \Big) \vr^i_0 \vt^i_0 \ \mbox{a.e. in}\ (0,T) \times Q_i 
	\label{problem}	
\end{align}	 
for any $i = 1,\dots, N$. 

Problem \eqref{problem} can be solved by the method of convex integration.
In contrast with the ``standard'' ansatz, the desired kinetic energy profile 
\[
\Lambda - \frac{3}{2} \exp\Big( \widetilde{S}(t) \Big) \vr^i_0 \vt^i_0 
\]
is not continuous but definitely falls into the category of Riemann integrable 
functions considered in \cite{AbbFei}. Consequently, the conclusion 
of Theorem \ref{TEP1} follows from \cite[Theorems 2.1, 2.4]{AbbFei}. As a matter of fact, these results guarantee the existence of only one weak solution. However, using the procedure detailed in \cite[Section 13.6]{Fei2016}, this solution can be used as a subsolution in the convex integration process to obtain infinitely many solutions as claimed.

\section{Conclusion}
\label{C}

The previous discussion reveals a fundamental question: to which extent 
the selection criteria based on the second law of thermodynamics, specifically maximization of the entropy production rate, are relevant for the 
Euler system of gas dynamics? Summarizing we know that: 
\begin{itemize}
	\item the dissipative measure valued (DMV) solutions that are entropy rate admissible are necessarily weak solutions, see \cite{FeiLukYu}; 
	\item there exist 1--D Riemann data for which the 1--D self--similar solution is not entropy rate admissible in the 2--D setting, see Theorem~\ref{Tmain_1}; 
	\item the weak solutions emanating from wild initial data obtained by the available methods based on convex integration are not entropy rate admissible, which is a consequence of Theorem~\ref{TEP1}. 
\end{itemize}	 

As a natural closure of the set of entropy admissible weak solutions 
of the Euler system is represented by the class of DMV solutions, we conjecture that the entropy rate admissible weak solutions possibly \emph{do not exist} for a generic set of initial data.

\section{Acknowledgments}
E.C.~acknowledges financial support by the project PRIN 2022T9K54B. The work of E.F.~was partially supported by the Czech Sciences Foundation (GA\v CR), Grant Agreement 24--11034S. The work of O.K.~was supported by the Praemium Academiae of \v S. Ne\v casov\' a. The Institute of Mathematics of the Czech Academy of Sciences is supported by RVO:67985840. E.F.~and O.K.~are members of the Ne\v cas Center for Mathematical Modelling. S.M.~acknowledges financial support from the Deutsche Forschungsgemeinschaft (DFG, German Research Foundation) within SPP 2410, project number 525935467.

\def\cprime{$'$} \def\ocirc#1{\ifmmode\setbox0=\hbox{$#1$}\dimen0=\ht0
	\advance\dimen0 by1pt\rlap{\hbox to\wd0{\hss\raise\dimen0
			\hbox{\hskip.2em$\scriptscriptstyle\circ$}\hss}}#1\else {\accent"17 #1}\fi}


\end{document}